\newcommand{\R}{\mathbb{R}}
\newcommand{\E}{\mathbb{E}}
\newcommand{\I}{\mathbf{1}}
\newcommand{\var}{{\rm Var}\,}
\newcommand{\conv}{{\rm conv}\,}
\newcommand{\aff}{{\rm aff}\,}
\newcommand{\F}{\mathcal{F}}
\newcommand{\M}{\mathcal{M}}
\newtheorem{theorem}{Theorem}
\newtheorem{lemma}{Lemma}
\date{\today}
\author{I. B\'ar\'any \footnote{Supported by
 Hungarian OTKA grant 60427},
 F. Fodor\footnote{Supported by Hungarian OTKA grants 68398 and 75016 and
by the J\'anos Bolyai Research Scholarship of the Hungarian
Academy of Sciences.}, V. V\'{\i}gh \footnote{Supported by
 Hungarian OTKA grant 75016.}}
\title{Intrinsic volumes of inscribed random polytopes
in smooth convex bodies}
\begin{document}

\maketitle

\begin{abstract}
Let $K$ be a $d$ dimensional convex body with a twice continuously differentiable
boundary and everywhere positive Gauss-Kronecker curvature.
Denote by $K_n$ the convex hull of $n$ points chosen randomly and independently from
$K$ according to the uniform distribution. Matching lower and upper bounds
are obtained for the orders of magnitude of the variances of the $s$-th intrinsic volumes
$V_s(K_n)$ of $K_n$ for $s\in\{1, \ldots, d\}$. Furthermore, strong laws of large numbers
are proved for the intrinsic volumes of $K_n$.
The essential tools are the Economic Cap Covering Theorem of B\'ar\'any and Larman,
and the Efron-Stein jackknife inequality.
\end{abstract}

\section{Notation}
We shall work in $d$-dimensional Euclidean space $\R^d$, with origin $o$, and scalar product
$\langle\cdot, \cdot\rangle$, and induced norm $\|\cdot\|$. The dimension $d$ will be fixed
throughout the paper. We shall not distinguish between the Euclidean space and the underlying
vector space, and we will use the words {\em point} and {\em vector} interchangeably, as we
need them. Points of $\R^d$ are denoted by small-case letters of the roman alphabet, and sets
by capitals. For reals we use either Greek letters or small-case letters. $B^j$ stands for
the $j$-dimensional ball of radius $1$ centered at the origin, $S^{j-1}$ denotes the boundary
of $B^j$ and $\kappa_j$ denotes the volume of $B^j$. Note that any point $x\in
\partial B^j=S^{j-1}$ can be considered as a point of the boundary of $B^j$
and also as an outer normal to $B^j$ at the point $x$. For a point set $T\subset \R^d$, we
denote the \emph{convex hull} of $T$ by $\conv T$ or simply by $[T]$. A compact convex set
$K$ with nonempty interior is called a {\em convex body}.

The {\em intrinsic volumes} $V_s(K)$, $s=0, \ldots , d$ of a convex body $K$
can be introduced as coefficients of the Steiner formula
$$V(K+\lambda B^d)=\sum_{s=0}^{d}\lambda^{d-s}\kappa_{d-s}V_s(K),$$
where $K+\lambda B^d$ is the Minkowski sum of $K$ and $\lambda B^d$ of radius $\lambda\geq
0$. In particular, $V_d$ is the volume functional, $V_0(K)=1$, $V_1$ is proportional to the
mean width and $V_{d-1}$ is a multiple of the surface area. For more information on intrinsic
volumes, see the monographs by Schneider \cite{Sch93}, and Schneider and
Weil \cite{SchW08}. To avoid confusion we use $\lambda_s$ for $s$-dimensional volume (in
particular, $V_d=\lambda_d$).

For a convex body $K$ in $\R^d$, we say that $\partial K$ is $C^k_+$,
for some $k\geq 2$, if $\partial K$ is a $C^k$ manifold and its
Gaussian curvature is positive everywhere.
For a convex body $K$ with $C^2$ boundary and $x\in\partial K$,
we use $\sigma_j(x)$ for the the $j$th normalized elementary symmetric
function of the principal
curvatures of $\partial K$ at $x$. In particular, $\sigma_{d-1}(x)$ is the Gaussian curvature.

We integrate on $G(d,s)$, the {\em Grassmannian manifold} of $s$-dimensional linear subspaces of
$\R^d$. The normalized (and unique) {\em Haar-measure} on $G(d,s)$ is denoted by $\nu_s$
(for details, see \cite{SchW08}). If $L\in G(d,s)$ and $T\subset \R^d$ then we write $T|L$
for the orthogonal projection of $T$ onto $L$. We use $\I(\cdot)$ for the indicator function
of a set. As usual, $\E(\cdot)$ and $\var(\cdot)$ stand for expectation and variance of a
random variable. The notation $\ll$, $\gg$ and $\approx$ are used in the following sense. If
$f(n), g(n): \mathbb{N} \to \R$ are two functions we write $f\ll g$ if there exist a constant
$\gamma$ and a positive number $n_0$ such that we have $f(n)< \gamma g(n)$ for all $n>n_0$.
Furthermore $f\approx g$ if $g\ll f \ll g$. If $n$ is a positive integer, then $[n]$ denotes the
set $\{1,\ldots, n\}$. We write $[n] \choose k$ for the set of all $k$-element subsets of $[n]$.

\section{History and results}
In this paper we consider the following probability model. Let $K$ be a $d$-dimensional
convex body. Select the points $x_1, \ldots , x_n$ randomly and independently from $K$
according to the uniform probability distribution. The density of the uniform distribution
with respect to the Lebesgue measure is the function with the constant value
${\lambda_d(K)}^{-1}$. The convex hull $K_n:=[x_1, \ldots , x_n]$ is a (uniform) random
polytope inscribed in $K$. For a convex body $K$, the expectation $\E_n(V_s)$ of the $s$-th
intrinsic volume of $K_n$ tends to $V_s(K)$ as $n$ tends to infinity, and the shape of the
boundary of $K$ determines the asymptotic behaviour of the random variable
$V_s(K)-\E_n(V_s)$. In this article, we shall prove matching lower and upper bounds for the
order of magnitude of the variance of $V_s(K_n)$ for convex bodies with $C^2_+$ boundary. The
upper bound on the variance will imply a strong law of large numbers for $V_s(K_n)$.

Much effort has been devoted to investigating the properties of various geometric functionals
associated with uniform random polytopes. An up-to-date survey about the current state of
this field can be found in the paper by B\'ar\'any \cite{Bar08}, the book by Schneider and
Weil \cite{SchW08}, and also in the survey by Weil and Wieacker \cite{WW93} from 1993. Here
we only wish to give a brief outline of results that are directly connected with our results.

In particular, the following asymptotic formula is
known about the expectation of intrinsic volumes $V_s(K)$, $s=1,\ldots,d$.
If the boundary $\partial K$ of $K$ is $C^2_+$, then
\begin{equation}
\label{poscurv}
\lim_{n\to\infty}\left(\frac{n}{V(K)}\right)^{\frac2{d+1}}[V_s(K) - \mathbb{E}\, V_{s}(K_n)]=
c_{d,s}\int_{\partial K}\sigma_{d-1}(x)^{\frac1{d+1}}
\sigma_{d-s}(x)\, dx ,
\end{equation}
 with a constant $c_{d,s}>0$ depending only on $d$ and $s$.
Formula (\ref{poscurv}) is due to B\'ar\'any \cite{Bar92}, if $K$ has
$C^3_+$ boundary, and to Reitzner \cite{Rei04} if $K$ has $C^2_+$ boundary.


Until quite recently, very little had been known about the variance of intrinsic volumes of
uniform random polytopes. In 1993, in the survey paper by Weil and Wieacker \cite{WW93}, the
authors state that ``the determination of the variance, for instance, is a major open
problem''. K\"ufer \cite{Kufer} obtained the first result in this direction; he proved the
upper bound $O(n^{-(d+3)/(d+1)})$ for the variance of the missed volume for the
$d$-dimensional unit ball. A major breakthrough was achieved by Reitzner \cite{Rei03},
who proved that, for a convex body $K$ with $C^2_+$ boundary,
\begin{equation}\label{variance-upperbound}
\text{Var }V(K_n)\leq c(K)n^{-(d+3)/(d+1)},
\end{equation}
where the constants $c(K)$ depend on $K$ and the dimension only. The proof
of Reitzner's result rests on the jackknife inequality of Efron and
Stein \cite{ES81}, which we also use in our argument. B\"or\"oczky, Fodor, Reitzner
and V\'\i gh \cite{BFRV09} obtained an upper bound of the same order of magnitude as
in (\ref{variance-upperbound}) for the variance for the mean width of a uniform random polytope
for the case when the mother body has a rolling ball. B\'ar\'any and Reitzner \cite{BR} established an
upper bound for the case when $K$ is a polytope. More precisely, they proved that
\begin{equation}
\text{Var }V(K_n)\leq c(K)\frac{1}{n^2}(\log n)^{d-1},
\end{equation}
where the constant $c(K)$ depends on $K$ and the dimension only.

The above upper bounds imply strong laws of large numbers for the corresponding functionals.

In \cite{Rei05} Reitzner proved matching lower bounds for the variance of the volume functional
for convex bodies with $C^2_+$ boundary, that is,
\begin{equation}
\text{Var }V(K_n)\geq c(K)n^{-(d+3)/(d+1)}.
\end{equation}
These lower bounds were extended by B\'ar\'any and Reitzner \cite{BR} to every convex body in the form
\begin{equation}
\text{Var }V(K_n)\gg \frac{1}{n}V(K(1/n)),
\end{equation}
where $K(1/n)$ is the wet part of $K$ with parameter $1/n$, see Section~3 for details. The
only known lower bound for the variance of an intrinsic volume of a uniform random polytope
other than volume is due to B\"or\"oczky, Fodor, Reitzner and V\'\i gh \cite{BFRV09}. They
established a lower bound with the order of magnitude $n^{-(d+3)/(d+1)}$ for the mean width
of uniform random polytopes for the case when the mother body has a rolling ball.

The variance of the random variable $f_s(K_n)$, which is the number of $s$-dimensional faces
of $K_n$, ($s=0,1,\ldots,d-1$), can be estimated using the above methods as shown
in~\cite{Rei03} and \cite{BR}. Very recently Schreiber and Yukich~\cite{SY} have determined
the variance of $f_0(K_n)$ asymptotically when $K$ is the unit ball, a significant
breakthrough. Hopefully, their methods can work for all $f_s(K_n)$ and $V_s(K_n)$ as well.

In this article we determine the order of magnitude of $\var V_s(K_n)$ when $K=B^d$, the unit
ball.
\begin{theorem}\label{ball}
Let $B^d\subset\R^d$ be the $d$-dimensional unit ball. Let $B^d_n$ be the convex hull of $n$
independent random points chosen from $B^d$ according to the uniform probability
distribution. Then, for $s=1,\ldots, d$,
\begin{equation}
\var V_s(B^d_n)\approx n^{-\frac{d+3}{d+1}}, \text{as } n\to\infty.
\end{equation}
\end{theorem}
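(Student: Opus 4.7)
My plan is to apply the Efron--Stein jackknife inequality. Writing $B^d_{(n)}$ for the convex hull of $x_1,\ldots,x_{n-1}$, exchangeability of the $x_i$ gives
\begin{equation*}
\var V_s(B^d_n)\le n\,\E\bigl[(V_s(B^d_n)-V_s(B^d_{(n)}))^2\bigr],
\end{equation*}
so it suffices to bound the second moment of the increment. The increment vanishes unless $x_n\notin B^d_{(n)}$, in which case $x_n$ cuts off a cap $C=C(x_n,B^d_{(n)})$ of $B^d$. Using Kubota's formula $V_s(K)=c_{d,s}\int_{G(d,s)}\lambda_s(K|L)\,d\nu_s(L)$ together with Cauchy--Schwarz, the squared increment is dominated by the Haar integral of the squared $s$-volumes of the projections of $C$ onto $L\in G(d,s)$. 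At this point the Economic Cap Covering Theorem of B\'ar\'any and Larman enters, in the spirit of Reitzner's proof~\cite{Rei03} for the volume functional: one splits $B^d$ into economic caps of parameter $\approx 1/n$ and sums the squared cap contributions weighted by the probability that $x_n$ falls into each cap. Rotational symmetry of $B^d$ renders the Grassmannian integral tractable and should give $\E[(V_s(B^d_n)-V_s(B^d_{(n)}))^2]\ll n^{-(2d+4)/(d+1)}$, hence $\var V_s(B^d_n)\ll n^{-(d+3)/(d+1)}$.

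\textbf{Lower bound.} For the matching lower bound I would use the first-order Hoeffding/ANOVA orthogonality
\begin{equation*}
\var V_s(B^d_n)\ge n\,\var h_1(x_1),\qquad h_1(x):=\E[V_s(B^d_n)\mid x_1=x]-\E V_s(B^d_n),
\end{equation*}
which reduces the problem to a lower estimate for $\var h_1(x_1)$. Rotational symmetry forces $h_1(x)$ to depend only on $\|x\|$, collapsing the question to a one-variable calculation. Running the Economic Cap Covering Theorem in the opposite direction---to produce a matching lower estimate on the expected marginal $V_s$-contribution of a vertex near the sphere---one obtains an explicit positive lower bound for $h_1(x)$ on the wet region $1-\|x\|\ll n^{-2/(d+1)}$. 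A direct integration along the radius then gives $\var h_1(x_1)\gg n^{-(2d+4)/(d+1)}$, and hence the desired lower bound $\var V_s(B^d_n)\gg n^{-(d+3)/(d+1)}$.

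\textbf{Main obstacle.} The delicate step is the lower estimate on $h_1(x)$: one must show that the expected marginal $V_s$-contribution of a boundary-near point genuinely attains the order predicted by the upper cap-covering estimate, with no hidden cancellation. Reitzner~\cite{Rei05} achieved this for $V_d$. For general $s$ an additional difficulty is that $V_s$ is not additive over caps but only averages-additive through Kubota's formula, so one has to control the conditional expectation of the marginal increment together with the interplay of the different projection directions in $G(d,s)$. The full rotational symmetry and uniform positive curvature of $B^d$ are what make this tractable: they keep the cap geometry explicit and reduce $h_1$ to a function of the single radial variable $\|x\|$.
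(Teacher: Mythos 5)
The overall architecture (Efron--Stein plus Kubota plus cap coverings) is the paper's, but your Cauchy--Schwarz step is a genuine gap for every $s<d$. Writing $g(L)=\lambda_s((K_{n+1}\setminus K_n)|L)$, you replace $\bigl(\int_{G(d,s)} g\,d\nu_s\bigr)^2$ by $\int_{G(d,s)} g^2\,d\nu_s$. When $x_{n+1}$ cuts off a cap of height $t$ with outer normal $z$, the function $g$ is of order $t^{(s+1)/2}$ only on the set of subspaces $L$ with $\angle(z,L)\lesssim\sqrt t$, whose measure is $\approx t^{(d-s)/2}$ (this is exactly Lemma~\ref{szoges}), and is essentially zero elsewhere. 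Hence $\bigl(\int g\bigr)^2\approx t^{d+1}$ while $\int g^2\approx t^{(d+s+2)/2}$: Cauchy--Schwarz loses the factor $t^{(d-s)/2}$. With $t\approx n^{-2/(d+1)}$ this turns the target bound $n^{-(d+3)/(d+1)}$ into only $n^{-(s+3)/(d+1)}$, which is vacuous already for the mean width $s=1$. This loss is precisely why the paper does \emph{not} use Cauchy--Schwarz: it expands the square as a double integral over $G(d,s)\times G(d,s)$ and exploits that the two caps $C_d(I,A)$, $C_d(J,B)$ must intersect, which forces $\angle(z,B)\le 2b_dV_d(I,A)^{1/(d+1)}$ and, via Lemma~\ref{szoges}, restores the missing factor $\approx t^{(d-s)/2}$ in the $B$-integration. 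A second omission: without conditioning on the floating body being contained in $K_n$ (the event $T_n$, controlled via B\'ar\'any--Dalla), the cap cut off by $x_{n+1}$ need not be small, so the dyadic cap-covering summation cannot be started at $h_0$.

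\textbf{On the lower bound.} Your route via the first-order Hoeffding projection, $\var V_s(B^d_n)\ge n\,\var h_1(x_1)$, is genuinely different from the paper's, which instead builds $m\approx n^{(d-1)/(d+1)}$ disjoint caps, conditions on a $\sigma$-algebra fixing everything except one point per cap, and adds up conditional variances. Your inequality is correct and the heuristic accounting ($h_1\approx 1/n$ on a shell of volume $\approx n^{-2/(d+1)}$, and $(\E g)^2$ of lower order) does produce the right exponent, so this is a viable and arguably cleaner decomposition. However, the ``main obstacle'' you flag is not a side issue: it is the entire geometric content of the proof. To show that the marginal $V_s$-increment of a point at depth $\approx n^{-2/(d+1)}$ is $\gg 1/n$ with probability $\gg1$, one must produce, for a set of subspaces $L$ of measure $\gg t^{(d-s)/2}$, a projected simplex of $s$-volume $\gg t^{(s+1)/2}$ that is guaranteed to lie outside $K_{n-1}|L$; the danger is another sample point whose projection onto $L$ nearly coincides with that of $x$. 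The paper resolves this with the dual-cone construction of Lemma~\ref{alsokulcs} (prescribing $d$ companion points in the small simplices $\Delta_1,\dots,\Delta_d$ and restricting the Kubota integral to subspaces meeting $\Sigma_2$), combined again with Lemma~\ref{szoges}. Without some version of that lemma your lower bound remains a plan rather than a proof.
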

The proof of Theorem~\ref{ball} can be extended to smooth convex bodies with $C^2_+$
boundary. All techniques used in the argument for the unit ball apply, with minor variations,
to the case of smooth convex bodies. We give a brief outline of how this can be achieved in
Section~6.
\begin{theorem}\label{main} Let $K\subset \R^d$ be a convex body with
$C^2_+$ boundary. Let $K_n$ be the convex hull of $n$ independent random
points chosen from $K$ according to the uniform probability distribution. Then,
for $s=1,\ldots, d$,
\begin{equation}
\var V_s(K_n)\approx n^{-\frac{d+3}{d+1}}, \text{as } n\to\infty.
\end{equation}
\end{theorem}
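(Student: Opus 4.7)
The plan is to follow the proof of Theorem~\ref{ball} step by step, replacing the explicit spherical estimates by their analogues for a $C^2_+$ convex body. The two pillars of the argument remain unchanged: the Efron--Stein jackknife inequality for the upper bound, and a disjoint-cap construction giving essentially independent local contributions for the lower bound. The $C^2_+$ hypothesis enters only through local comparison with balls: at every $x\in\partial K$ the body $K$ is sandwiched between two balls whose radii depend continuously on $x$ and are therefore uniformly bounded away from $0$ and $\infty$, so every geometric quantity attached to a small cap of $K$ (volume, height versus base-radius relation, diameter, etc.) differs from its spherical counterpart only by bounded multiplicative constants.

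\textbf{Upper bound.} I would apply the Efron--Stein inequality to obtain
$$\var V_s(K_n)\le n\,\E\bigl(V_s(K_n)-V_s(K_{n-1})\bigr)^2.$$
Monotonicity of $V_s$ under inclusion shows that $|V_s(K_n)-V_s(K_{n-1})|$ is dominated by $V_s$ of the set of points of $K_n$ separated from $K_{n-1}$ by the deleted point $x_n$, a set contained in a cap $C(x_n)$ of $K$ whose base is determined by the facets of $K_n$ incident to $x_n$. I would then split the expectation according to the volume of $C(x_n)$, bound the probability that $x_n$ lies in a cap of given volume by the Economic Cap Covering Theorem of B\'ar\'any and Larman, and integrate. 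By the ball comparison above, the resulting dyadic sum is (up to a constant depending on $K$) the one carried out for $B^d$ and yields the order $n^{-(d+3)/(d+1)}$.

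\textbf{Lower bound.} I would imitate the disjoint-cap construction originally due to Reitzner \cite{Rei05} and used in Theorem~\ref{ball}. Select $N\approx n$ pairwise disjoint caps $C_1,\dots,C_N$ of $K$, each of volume of order $1/n$, distributed over $\partial K$, and bound $\var V_s(K_n)$ from below by the sum of the variances of the conditional expectations $\E[V_s(K_n)\mid \text{points outside } C_j]$. Each such summand is of order $n^{-(d+3)/(d+1)}/N$, by a local computation quantifying how much $V_s(K_n)$ changes when the presence or absence of a vertex inside $C_j$ is toggled; since $K$ locally osculates a ball, this computation reduces to the one performed for $B^d$. Summing over the $N$ caps gives the required lower bound.

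\textbf{Main obstacle.} The main technical issue not already present in Reitzner's treatment of $V_d$ in \cite{Rei03, Rei05} is that $V_s$ with $s<d$ is not expressible as an integral of an indicator function, so the local effect of a single vertex on $V_s$ is less transparent. The cleanest remedy is to invoke Kubota's formula
$$V_s(K_n)=c_{d,s}\int_{G(d,s)}\lambda_s(K_n|L)\,d\nu_s(L),$$
which represents $V_s(K_n)$ as an average of $s$-dimensional volumes of random projections. The $C^2_+$ hypothesis on $\partial K$ translates into a uniform-in-$L$ curvature bound for $\partial(K|L)$, so the cap estimates used above apply to each projection with constants independent of $L$. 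Averaging in $L$, both the Efron--Stein computation of the upper bound and the local computation in the lower bound reduce to their $s=d$ analogues inside a projection, and the proof of Theorem~\ref{ball} extends without further changes.
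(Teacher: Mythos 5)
Your overall strategy (Efron--Stein plus cap coverings for the upper bound, disjoint caps with local variance for the lower bound, Kubota's formula to access $V_s$) is the same as the paper's, and your handling of the $C^2_+$ hypothesis via two-sided ball comparison and uniform curvature bounds for all projections $K|L$ is exactly what Section~6 does. But there is a genuine gap in the way you propose to dispose of the Grassmannian average. You claim that ``both the Efron--Stein computation of the upper bound and the local computation in the lower bound reduce to their $s=d$ analogues inside a projection.'' They do not, and the discrepancy is a positive power of $n$. For the upper bound, Efron--Stein requires $\E\bigl(\int_{G(d,s)}\lambda_s((K_{n+1}\setminus K_n)|L)\,\nu_s(dL)\bigr)^2$; if you pass the square inside the $\nu_s$-integral (which is what ``reduce to a projection and then average'' amounts to), you are bounding $\var V_s(K_n)$ by $\sup_L \var \lambda_s(K_n|L)$, and the per-projection Efron--Stein computation only gives $\ll n^{-(s+3)/(d+1)}$, which for $s<d$ misses the target by the factor $n^{(d-s)/(d+1)}$. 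The paper instead keeps the double integral over pairs $(A,B)\in G(d,s)^2$ and exploits the correlation between the two directions: the caps $C_d(I,A)$ and $C_d(J,B)$ both contain $x_{n+1}$, which forces $\angle(z,B)\ll V_d(I,A)^{1/(d+1)}$, an event of $\nu_s$-measure $\approx V_d(I,A)^{(d-s)/(d+1)}$ by Lemma~\ref{szoges}. This is precisely the missing factor, and it is the reason the answer $n^{-(d+3)/(d+1)}$ is independent of $s$. Nothing in your sketch produces it.

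The same lemma is equally indispensable in the lower bound, where the paper shows (Lemma~\ref{alsokulcs}) that perturbing one vertex inside a cap of height $t$ changes $\lambda_s(\cdot|L)$ by $\gg t^{(s+1)/2}$ only for the subspaces $L$ within angle $\approx\sqrt t$ of the cap normal, a set of $\nu_s$-measure $\approx t^{(d-s)/2}$; the product $t^{(s+1)/2}\cdot t^{(d-s)/2}=t^{(d+1)/2}$ gives the per-cap variance $\gg t^{d+1}=n^{-2}$. Here your cap count is also wrong: one cannot choose $N\approx n$ disjoint caps of volume $\approx 1/n$, since they all lie in the wet part $K(1/n)$, whose volume is only $\approx n^{-2/(d+1)}$; the correct count is $N\approx n^{(d-1)/(d+1)}$, and then $n^{-2}\cdot n^{(d-1)/(d+1)}=n^{-(d+3)/(d+1)}$. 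As written, your per-cap estimate ``$n^{-(d+3)/(d+1)}/N$'' is circular (it yields the desired total for any $N$) and is not supported by a computation. To repair the proposal you need to carry out the angle--measure analysis on $G(d,s)$ explicitly in both bounds rather than averaging projection-by-projection.
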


The upper bound for the variance of the intrinsic volumes implies a strong law of large numbers
via standard arguments. Thus, we obtain
\begin{theorem}\label{largenum}
Let $K\subset\R^d$ be a convex body with $C^2_+$ boundary and let $K_n$ be the convex
hull of $n$ independent random points from $K$ chosen according to the uniform
distribution. Then for $s=1,\ldots , d$,
\begin{equation}
\lim_{n\to\infty}(V_s(K)-V_s(K_n)) \cdot n^{\frac{2}{d+1}}=
c_{d,s}\cdot \lambda_d(K)^{\frac{2}{d+1}} \int_{\partial K}
\sigma_{d-1}(x)^{\frac{1}{d+1}} \sigma_{d-s}(x)dx
\end{equation}
with probability $1$.
\end{theorem}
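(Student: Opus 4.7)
The plan is to combine the expectation asymptotic (\ref{poscurv}) with the variance upper bound of Theorem~\ref{main} via a Chebyshev--Borel--Cantelli argument along a suitable subsequence, and then to interpolate to the full sequence using the monotonicity of intrinsic volumes under inclusion. First, I would realize all the $K_n$ on one probability space by fixing an i.i.d.\ sequence $x_1,x_2,\ldots$ uniform in $K$ and setting $K_n=[x_1,\ldots,x_n]$, so that $K_n\subseteq K_{n+1}$. Since intrinsic volumes are monotone under inclusion of convex bodies, the defect $Y_n:=V_s(K)-V_s(K_n)\ge 0$ is non-increasing in $n$.

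Write $a=2/(d+1)$, and let $\alpha$ denote the right-hand side of the theorem, so that (\ref{poscurv}) reads $n^a\E Y_n\to\alpha$. Chebyshev's inequality combined with Theorem~\ref{main} then gives
\begin{equation*}
P\bigl(n^a|Y_n-\E Y_n|>\epsilon\bigr)\;\le\;\epsilon^{-2}n^{2a}\var Y_n\;\ll\;\epsilon^{-2}n^{-(d-1)/(d+1)},
\end{equation*}
since $2a-(d+3)/(d+1)=-(d-1)/(d+1)$. This tail bound is not summable in $n$, so I would pass to a sparser subsequence $n_k:=\lfloor k^r\rfloor$ with $r>(d+1)/(d-1)$, for which $\sum_k n_k^{-(d-1)/(d+1)}$ converges. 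Applying Borel--Cantelli to the events $\{n_k^a|Y_{n_k}-\E Y_{n_k}|>1/m\}$ for each positive integer $m$ then yields $n_k^a(Y_{n_k}-\E Y_{n_k})\to 0$ almost surely, and hence $n_k^a Y_{n_k}\to\alpha$ a.s.

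To obtain convergence along the full sequence, I would exploit the monotonicity of $Y_n$: for $n_k\le n\le n_{k+1}$,
\begin{equation*}
\Bigl(\frac{n_k}{n_{k+1}}\Bigr)^{\!a} n_{k+1}^a Y_{n_{k+1}}\;\le\;n^a Y_n\;\le\;\Bigl(\frac{n_{k+1}}{n_k}\Bigr)^{\!a} n_k^a Y_{n_k},
\end{equation*}
and since $n_{k+1}/n_k\to 1$ for this polynomial subsequence, both outer expressions converge a.s.\ to $\alpha$, sandwiching $n^a Y_n$ to the desired limit. The substantive obstacle has already been absorbed by the variance bound of Theorem~\ref{main}; the remaining work is essentially routine, with the one subtle point being the monotonicity step, which is needed because the Chebyshev tail is not summable along $\mathbb{N}$ itself.
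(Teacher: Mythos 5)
Your proof is correct and follows essentially the same route as the paper: Chebyshev's inequality with the variance bound of Theorem~\ref{main}, Borel--Cantelli along a polynomial subsequence $n_k$, combination with the expectation asymptotic (\ref{poscurv}), and then interpolation to the full sequence via monotonicity of $Y_n=V_s(K)-V_s(K_n)$ and the fact that $n_{k+1}/n_k\to 1$. The paper fixes $n_k=k^4$ while you allow any $r>(d+1)/(d-1)$, which is an inessential difference; your sandwich inequality in the interpolation step is also stated more carefully than in the paper.
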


The lower bound on the variance can be used to prove the central limit theorem (CLT for
short) for the random variable $V_s(\Pi_n)$. Here $\Pi_n$, the Poisson random polytope, is
similar to the random polytope $K_n$, just for the Poisson polytope, the number of random
points chosen from $K$ is a Poisson distributed random variable with mean $n$. The method of
proving the CLT for this case was introduced by Reitzner~\cite{Rei05} and extended in
B\'ar\'any, Reitzner~\cite{BR}. It works, with more or less straightforward modifications,
for the case of $V_s(\Pi_n)$ when $K$ is either the unit ball, or a $C^2_+$ convex body. The
actual proof is long, technical, and tedious and does not use significant new ideas and is
therefore omitted. Transferring the CLT from the Poisson polytope to the usual random
polytope is often not so simple and was carried out, for $V(K_n)$ and $f_s(K_n)$, by Van Vu
~\cite{Vu06} for smooth convex bodies, and by B\'ar\'any, Reitzner~\cite{BR} for polytopes
using different methods. The same transference for the mixed volumes will, most likely,
require some new method.

\section{Tools}
In this section we describe two statements that will be used in our proof, and
we shall prove a lemma that will be a useful tool for both the lower and upper estimates
of the variance.

If $K$ is a convex body, then a cap of $K$ is a set $C=K\cap H_+$, where $H_+$ is closed half-space.
We define the function $v:K\to \R$ as
$$v(x):=\min\{\lambda_d(K\cap H_+)\, | \, x\in H_+ \text{ and } H_+ \text{ is a closed half-space}\}.$$
The set $K(t)=K(v\leq t)=\{x \in K \,| \, v(x)\leq t\}$ is called the wet part of $K$ with parameter $t>0$.
The remaining part of $K$, namely, $K(v\geq t)=\{x\in K\; | \; v(x\geq t)\}$ is the floating body of $K$ with parameter $t>0$.
One can easily verify that if $K$ is a ball then $\lambda_d(K(t))\approx t^{\frac{d+1}{2}}$.

The following theorem of B\'ar\'any and Larman~\cite{BL88} and B\'ar\'any~\cite{Bar89} plays
a central role in our proof.
\begin{theorem}[Economic Cap Covering]\label{ecocap}
Assume that $K$ is a convex body with unit volume, and $0<t <t_0=(2d)^{-2d}$. Then there are
caps $C_1, \ldots, C_m$ and pairwise disjoint convex sets $C_1',\ldots, C_m'$ such that
$C_i'\subset C_i$ for each $i$, and
\begin{itemize}
\item [(i)] $\bigcup_1^m C_i'\subset K(t) \subset \bigcup_1^m
C_i$,
\item [(ii)] $V_d(C_i') \gg t$ and $V_d(C_i)\ll t$ for each $i$,
\item [(iii)] for each cap $C$ with $C\cap
K(v>t)=\emptyset$ there is a $C_i$ containing C.
\end{itemize}
\end{theorem}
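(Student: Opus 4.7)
The natural route is through the theory of Macbeath regions. For $x\in K$ and $\lambda>0$, put
\[
M(x,\lambda)=x+\lambda\bigl((K-x)\cap(x-K)\bigr),
\]
the $\lambda$-dilate about $x$ of the largest centrally symmetric body inside $K$ centered at $x$. Two facts are the engine of the proof. First, an \emph{engulfing property}: if $M(x,\tfrac12)\cap M(y,\tfrac12)\neq\emptyset$, then $M(y,1)\subset M(x,c_0)$ for an absolute constant $c_0$. Second, a \emph{cap--region comparison}: for $x$ on the surface $\{v=t\}$ of the floating body, the minimal cap $C(x)$ of $K$ containing $x$ on its base satisfies $V_d(C(x))\approx t$, and the double inclusion $M(x,\tfrac12)\subset C(x)\subset M(x,c_1)$ holds with a dimensional constant $c_1$. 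Both facts belong to classical Macbeath/Ewald--Larman--Rogers theory; the cap--region comparison is proved by exploiting the affine invariance of $M(x,\lambda)$ to reduce, via a linear transformation placing the cap in a normalized position, to a direct volume estimate.

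\textbf{Construction of the covering.} Select a maximal family $\{x_1,\ldots,x_m\}\subset\{v=t\}$ such that the regions $M(x_i,\tfrac12)$ are pairwise disjoint, and set $C_i'=M(x_i,\tfrac12)$ together with $C_i=C(x_i)$, the minimal cap at $x_i$. Disjointness of the $C_i'$ and the inclusions $C_i'\subset C_i$ are built into the construction, and (ii) follows from the cap--region comparison together with $v(x_i)=t$. For the right inclusion in (i), take $y\in K(t)$ and consider a minimal cap $C(y)$; slide $y$ inward, along the inner normal of the bounding hyperplane of $C(y)$, until one meets a point $x^{*}\in\{v=t\}$. By maximality, $M(x^{*},\tfrac12)$ meets some $M(x_j,\tfrac12)$, and the engulfing property yields $x^{*}\in M(x_j,c_0)$. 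Chaining this with the cap--region comparison and choosing the global constant $c_1$ large enough to absorb $c_0$ then forces $y\in C_j$.

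\textbf{Property (iii) and main obstacle.} For (iii), a cap $C$ with $C\cap K(v>t)=\emptyset$ lies entirely in the wet part; its deepest interior point $z$ satisfies $v(z)\leq t$, and pushing $z$ inward to $\{v=t\}$ as above, followed by a single application of engulfing now at the level of the full cap rather than a single point, yields $C\subset C_j$. The genuinely hard ingredient in the whole argument is the upper half $C(x)\subset M(x,c_1)$ of the cap--region comparison: one must show that a minimal cap is ``round'' with respect to the Macbeath geometry at its base point, and it is here that convexity does real work rather than bookkeeping. Once the engulfing property and the cap--region comparison are in place, the rest is a Vitali-type packing--covering argument, and the restriction $t<t_0=(2d)^{-2d}$ merely pins down an uppermost scale on which the local affine estimates used in the proof of the comparison remain valid.
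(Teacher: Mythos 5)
The paper does not prove Theorem~\ref{ecocap}; it is quoted as an external tool from B\'ar\'any--Larman \cite{BL88} and B\'ar\'any \cite{Bar89}, so there is no ``paper's own proof'' against which to compare. Your Macbeath-region sketch does follow the strategy of those sources, but one of the ``facts'' you state is false as written, and the construction built on it does not go through unrepaired.

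The left inclusion in your cap--region comparison, $M(x,\tfrac{1}{2})\subset C(x)$, fails for precisely the points you apply it to, namely $x\in\{v=t\}$. A half-space realizing the minimal cap at such an $x$ must have $x$ on its bounding hyperplane --- otherwise translating it towards $x$ strictly shrinks the cap while keeping $x$ inside --- whereas $M(x,\tfrac{1}{2})$ is centrally symmetric about $x$ and therefore extends to both sides of that hyperplane. Hence $C_i'=M(x_i,\tfrac{1}{2})$ is not contained in $C_i=C(x_i)$, and the inclusion $C_i'\subset C_i$ is not ``built into the construction'' as you claim. Moreover, with $C_i$ equal to the exact minimal cap (of volume exactly $t$), property~(iii) is out of reach: a cap lying entirely in the wet part may itself have volume comparable to $t$, and there is no reason for it to fit inside a single minimal cap anchored at one point of $\{v=t\}$. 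The original B\'ar\'any--Larman construction fixes both problems by taking $C_i$ to be a cap with a fixed safety margin over the minimal one (a bounded multiple of the minimal width) and $C_i'$ a suitably trimmed, one-sided Macbeath-type region inside it, paying only in the implicit constants of (ii). Finally, as you yourself note, the substantive inclusion $C(x)\subset M(x,c_1)$ is left entirely to the cited ``classical theory,'' so what you have is a plan rather than a proof --- but the plan (maximal packing of Macbeath regions on $\{v=t\}$, the engulfing lemma, and the cap/Macbeath comparison) is indeed the one used in the original sources.
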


An immediate consequence of this theorem is that $t\cdot m\ll \lambda_d(K(t))\ll m\cdot t$.
For more details and for further
references on the Economical Cap Covering Theorem see \cite{Bar06} and \cite{BL88}.

Our second major tool is the Efron-Stein jackknife inequality (see \cite{E65} and \cite{Rei03}).
If $K_n$ denotes the random polytope inscribed in a convex body $K$ as above,
then the original Efron-Stein theorem readily implies that
\begin{equation}\label{Efronstein}
 \var (V_s(K_n))\leq (n+1) \E(V_s(K_{n+1})-V_s(K_n))^2.
\end{equation}

Finally, we need a simple statement on the measure of special linear subspaces of
$\R^d$. Assume $z \in S^{d-1}$ and $A \in G(d,s)$ are given. Their angle $\angle(z,A)$ is
defined as the minimum of the angles $\angle(z,x)$ for all $x \in A$.

\begin{lemma}\label{szoges}
For fixed $z \in S^{d-1}$ and for small $\alpha>0$, $\nu_s\{A\in G(d,s)\, | \, \angle(z,A)\leq
\alpha\}\approx \alpha^{d-s}$.
\end{lemma}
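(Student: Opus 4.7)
The plan is to exploit the rotation invariance of $\nu_s$ to recast the problem as a computation of spherical surface area, and then extract the leading behaviour as $\alpha\to 0$.

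First I would reduce to a fixed subspace. The set $\{A\in G(d,s):\angle(z,A)\le\alpha\}$ depends on $z$ only through the orbit structure, and both $\nu_s$ and the event are invariant under the diagonal action of $O(d)$ on $(z,A)$. Consequently the probability equals the one obtained by fixing $A_0=\mathrm{span}(e_1,\dots,e_s)$ and letting $z$ be uniform on $S^{d-1}$:
\begin{equation*}
\nu_s\{A\in G(d,s)\,|\,\angle(z,A)\le\alpha\}=\mathrm{Pr}_{z\sim \mathrm{unif}(S^{d-1})}[\angle(z,A_0)\le\alpha].
\end{equation*}
For a unit vector $z=x+y$ with $x\in A_0$, $y\in A_0^\perp$, the minimum of $\angle(z,u)$ over $u\in A_0\setminus\{o\}$ is attained at $u=x$, giving $\cos\angle(z,A_0)=\|x\|$ and $\sin\angle(z,A_0)=\|y\|$. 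Hence the condition $\angle(z,A_0)\le\alpha$ is equivalent to $\|y\|\le\sin\alpha$, a tubular neighbourhood on $S^{d-1}$ of the great subsphere $S^{d-1}\cap A_0$.

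Next I would evaluate the normalized surface area of this neighbourhood. Writing $z=(x,y)\in \R^s\times \R^{d-s}$ with $\|x\|^2+\|y\|^2=1$ and integrating in the radial coordinate $t=\|y\|\in[0,1]$, standard polar decomposition on $S^{d-1}$ gives, up to a positive constant $c_{d,s}$ depending only on $d$ and $s$,
\begin{equation*}
\mathrm{Pr}[\|y\|\le\sin\alpha]=c_{d,s}\int_0^{\sin\alpha} t^{d-s-1}(1-t^2)^{(s-2)/2}\,dt.
\end{equation*}
(Equivalently, $\|y\|^2$ has a $\mathrm{Beta}((d-s)/2,s/2)$ law; this is the incomplete Beta integral.) For small $\alpha$ the factor $(1-t^2)^{(s-2)/2}$ is bounded between two positive constants on the range of integration, so the integral is comparable to $\int_0^{\sin\alpha}t^{d-s-1}\,dt=\frac{1}{d-s}\sin^{d-s}\alpha\approx \alpha^{d-s}$. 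This yields the two-sided bound $\nu_s\{A:\angle(z,A)\le\alpha\}\approx\alpha^{d-s}$ claimed.

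There is no real obstacle; the main point that needs a careful sentence is the first reduction, namely that switching from ``$z$ fixed, $A$ Haar-random'' to ``$A_0$ fixed, $z$ uniform on $S^{d-1}$'' preserves the measure of the event, which follows from the uniqueness of the $O(d)$-invariant probability measure on the flag manifold $\{(u,A):u\in S^{d-1},A\in G(d,s)\}$. Everything else reduces to a one-dimensional Beta-type integral whose leading order as $\alpha\to 0$ is of order $\alpha^{d-s}$.
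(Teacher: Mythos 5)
Your proof is correct, but it takes a genuinely different route from the paper's. The paper parametrizes the subspaces $A$ with $\angle(z,A)\le\alpha$ by pairs $(L,e)$, where $L\in G(d,s-1)$ lies in $z^{\perp}$ and $e\in L^{\perp}\cap S^{d-1}$ satisfies $\angle(e,z)\le\alpha$; for each fixed $L$ the admissible $e$ form a cap of angular radius $\alpha$ in the $(d-s)$-sphere $L^{\perp}\cap S^{d-1}$, of normalized measure $\approx\alpha^{d-s}$, and the paper then asserts -- without writing out the disintegration of $\nu_s$ over this fibration -- that the same order survives after integrating over $L$. You instead dualize: since $z\mapsto\nu_s\{A:\angle(z,A)\le\alpha\}$ is constant by rotation invariance, Fubini converts the quantity into the normalized spherical measure of the tube $\{z\in S^{d-1}: \|z|A_0^{\perp}\|\le\sin\alpha\}$ around the great subsphere $S^{d-1}\cap A_0$, which reduces to the one-dimensional Beta-type integral $\int_0^{\sin\alpha}t^{d-s-1}(1-t^2)^{(s-2)/2}\,dt\approx\alpha^{d-s}$. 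Your version has the advantage that every step is an explicit computation (the paper's ``it is not hard to see'' hides precisely the measure disintegration you avoid), while the paper's version makes the geometric source of the exponent $d-s$ -- a cap in a $(d-s)$-dimensional sphere -- more visible. Two minor points: the identity $\cos\angle(z,A_0)=\|x\|$ should be read with the minimum taken over nonzero $u\in A_0$ (the case $x=o$ has measure zero), and for $s=d$ your radial integral degenerates; but there the statement is trivial because $G(d,d)$ is a single point and $\angle(z,\R^d)=0$, so your argument as written covers all the cases $1\le s\le d-1$ that require proof.
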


\begin{proof}
Let $L \in G(d,s-1)$ be a subspace in the orthogonal complement $z^{\perp}$ of the vector $z$.
For every $e \in L^{\perp}\cap S^{d-1}$ with $\angle (e,z)\le \alpha$ the subspace
$A=\,$linspan$\,(L\cup \{e\})$ makes an angle at most $\alpha$ with $z$. Also conversely,
every such subspace $A\in G(d,s)$ can be written in this form. It is not hard to see that the
$\nu_s$-measure of this set is $\approx \alpha^{d-s}$.
\end{proof}

\section{Proof of the lower bound in Theorem~\ref{ball}}
The idea of the proof of the lower bound is similar to those presented in \cite{Rei05} and
\cite{BFRV09}, namely, we define small independent caps, and we show that the variance is
``large'' in each cap. From the properties of the variance the required estimate will follow.

We will use Kubota's formula (see \cite{SchW08}) to represent intrinsic volumes as mean
projections.
\begin{equation}\label{mixvolproj} V_s(K)=c(d,s) \int_{G(d,s)}
\lambda_s (K|L) \nu_s(dL),\end{equation} where $c(d,s)$ is a
constant depending only on $d$ and $s$.

For $x \in S$ and $t\in (0,1)$ we define $H(x,t)=\{z \; |\; \langle z,x \rangle = 1-t\}$ and
we write $x_t=(1-t)x$. Let $C(x,t)$ be the smaller cap cut off from $B^d$ by $H(x,t)$. We
call $x$ the centre of this cap. Clearly $B(x,t)=H(x,t)\cap B^d$ is a $(d-1)$-dimensional
ball centered at the point $x_t$. The radius of $B(x,t)$ is $\sqrt {t(2-t)}$, showing that
\begin{equation}\label{gyokt}
(x_t+\sqrt t B^d)\cap H(x,t) \subset H(x,t)\cap B^d \subset (x_t+\sqrt {2t} B^d) \cap H(x,t).
\end{equation}
This implies that for all $t \in (0,1)$ we have that
\begin{equation}\label{capprop} C(x,t) \subset x+ 2 \sqrt t B^d.\end{equation}
In fact, we will work with $t$ very close to zero (see (\ref{eq:tndef}) below), and all
inequalities with $\ll$ sign below are meant with $t \to 0$.

Next we inscribe a regular $(d-1)$-simplex into $B(x,t)$ whose vertices are the points $w_1,
w_2, \ldots, w_d \in \partial B(x,t)$.  It follows from (\ref{gyokt}) that the simplex
$[w_1,\ldots, w_d]$ contains the $(d-1)$-ball of radius $\sqrt t /d$ centered at the point
$x_t$. Set $w_0=x$. Then $\Delta=[w_0, w_1, \ldots, w_d]$ is a $d$-dimensional simplex
inscribed in $C(x,t)$.

For all $j=0,1,\ldots, d$ we define
$$\Delta_j=\Delta_j(x,t)=w_j+\frac{1}{4d}([w_0, w_1, \ldots,
w_d]-w_j).$$ $\Delta_j$ is a homothetic copy of $\Delta$ with centre $w_j$ and factor
$1/(4d)$. It readily follows from (\ref{gyokt}) that $V(\Delta_j(x,t))\approx t^{\frac{d+1}{2}}$.
Choose a point $z_j$ in each $\Delta_j(x,t)$, and define
$$\Sigma_1(x,t)=S^{d-1} \cap \left (x + \frac{\sqrt t}{8}B^d \right )$$ and
$$\Sigma_2(x,t)=S^{d-1} \cap \left (x + 2d \sqrt tB^d \right ).$$\\ Set $\Delta(z)=[z_0,\ldots,z_d]$
and write $N$ for the cone of outer normals to $\Delta(z)$ at vertex $z_0$. We claim
that
\begin{equation}\label{szigmakok}
\Sigma_1(x,t)\subset S^{d-1}\cap N \subset \Sigma_2(x,t).
\end{equation}
To prove (\ref{szigmakok}) pick an arbitrary $v\in S^{d-1}$ such that
$\langle v,x\rangle=0$. From the definition of $\Delta_j$ and from
(\ref{gyokt}) we obtain that
$$\frac{\sqrt t}{2d}\leq h_{\Delta(z)}(v)-\langle z_0,
v \rangle\leq 2 \sqrt t,$$  where $h_{\Delta(z)}(.)$ is the support function of $\Delta(z)$.
Similarly
$$\frac t2\leq \langle z_0, x \rangle - h_{\Delta(z)}(x) \leq t.$$

From these we deduce that the ``extremal'' element $u$ of the normal cone $N$ in the
direction $v$ ($u=\langle u,x \rangle x+ \langle u,v \rangle v$) satisfies
$$\frac{\sqrt t}{4}\leq \tan (\angle(u,x)) \leq 2d\sqrt t,$$ and so
the claim (\ref{szigmakok}) follows.

(\ref{szigmakok}) can be dualized:
\begin{equation}\label{dualszigma}
\Sigma_2^*(x,t)\subset \{\lambda(y-z_0)\; | \; \lambda\geq 0, y\in
[z_0,z_1, \ldots, z_d]\}\subset \Sigma_1^*(x,t),
\end{equation}
where $\Sigma_j^*(x,t)=\{y \; |\; \langle y, u \rangle \leq 0,
\forall u\in \Sigma_j(x,t) \}$ is the usual dual cone of $\Sigma_j$.
Note that (\ref{szigmakok}) also implies that there exists an
absolute constant $\gamma$, such that
\begin{equation}
\label{capind} B^{d}\backslash C(x,\gamma t)\subset z_0+\Sigma^*_2(x,t).
\end{equation}

Now fix $x$, $t$ and $z_j \in \Delta_j(x,t)$ for $j=1,\ldots, d$. We
write $F=[z_1, \ldots, z_d]$. Define the function $\hat V_s :
\Delta_0(x,t) \to \mathbb{R}$ as follows
$$\hat V_s(z_0)= \int_{L
\in G(d,s),\\ L\cap \Sigma_2\neq \emptyset} \lambda_s([z_0,F] | L) \nu_s(dL).$$ $\hat V_s$
clearly depends on $F$, if we want to emphasize this dependence, then we write $\hat
V_s(z_0;F)$.
\begin{lemma}\label{alsokulcs}
If $Z$ is a random point chosen uniformly from $\Delta_0(x,t)$
then $${\rm Var} \;\hat V_s(Z)\gg t^{d+1}.$$
\end{lemma}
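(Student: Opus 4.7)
The plan is to show that $\hat V_s(z_0)$, viewed as a function of $z_0\in\Delta_0(x,t)$, is approximately an affine function of the depth coordinate $\xi_0:=1-\langle z_0,x\rangle\in[0,t/(4d)]$, with absolute slope of order $t^{(d-1)/2}$. Since $\xi_0$ has variance of order $t^2$ under $Z$ uniform on $\Delta_0$, the law of total variance
$$\var\hat V_s(Z)\ge \var\bigl(\E[\hat V_s(Z)\mid\xi_0(Z)]\bigr)$$
will then yield the lower bound $\gg t^{d-1}\cdot t^2=t^{d+1}$.

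To derive the slope estimate, fix $L\in G(d,s)$ with $L\cap\Sigma_2(x,t)\ne\emptyset$, pick a unit vector $u\in L$ minimizing $\angle(u,x)$ (so $\angle(u,x)\le 2d\sqrt t$), and decompose $L=\R u\oplus L'$ with $L':=L\cap u^\perp$. Write $z_i=(1-\xi_i)x+\eta_i$ with $\eta_i\in x^\perp$ and set $F_0:=\conv\{\eta_1,\ldots,\eta_d\}\subset x^\perp$. The projections $z_i|L$ for $i\ge 1$ cluster near the $u$-level $1-t$ and form a set essentially congruent to $F_0|L'$, while $z_0|L$ sits near the $u$-level $1$. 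Thus $[z_0,F]|L$ is well approximated by a cone in $L$ with apex $z_0|L$, base $F_0|L'$, and height $\approx t-\xi_0$, giving
$$\lambda_s\bigl([z_0,F]\,|\,L\bigr)\;\approx\;\frac{t-\xi_0}{s}\,\lambda_{s-1}(F_0|L').$$
By Lemma~\ref{szoges}, $\nu_s\{L:L\cap\Sigma_2\ne\emptyset\}\approx t^{(d-s)/2}$; and the vertex construction guarantees that $F_0$ contains a $(d-1)$-ball of radius $\gg\sqrt t$, so $\lambda_{s-1}(F_0|L')\gg t^{(s-1)/2}$ uniformly in $L'\in G(x^\perp,s-1)$. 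Integrating then gives $\hat V_s(z_0)\approx (t-\xi_0)\cdot C(F)$ with $C(F)\gg t^{(d-1)/2}$, which delivers the slope estimate and the desired variance bound.

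The main technical obstacle will be to justify the cone approximation uniformly over $L$ in the integration domain. When $\theta=\angle(u,x)$ approaches its maximum $2d\sqrt t$, the ``tilt'' correction to the $u$-coordinate of $z_i|L$ has magnitude on the order of $\sin\theta\cdot|\eta_i|\approx t$, matching the leading height $t-\xi_0$ rather than being genuinely lower order. A likely clean workaround is to first prove the lower bound for the truncated integral $\hat V_s^{\rm red}(z_0)$ obtained by integrating only over $\{L:L\cap\Sigma_1(x,t)\ne\emptyset\}$, where $\theta\le\sqrt t/8$ and all error terms are strictly lower order, and then recover the bound for $\hat V_s$ by exploiting that both $\hat V_s^{\rm red}$ and $\hat V_s-\hat V_s^{\rm red}$ are nonincreasing in $\xi_0$ along segments of $\Delta_0$ parallel to $x$ (since $[z_0,F]\subset[z_0',F]$ whenever $z_0$ lies on the segment from $z_0'$ toward $F$), which forces a nonnegative correlation of their $\xi_0$-conditional means and propagates the variance lower bound from $\hat V_s^{\rm red}$ to $\hat V_s$.
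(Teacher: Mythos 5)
Your strategy is recognisably a cousin of the paper's (both isolate the dependence of $\hat V_s$ on $Z$'s position along the roughly-radial direction, both pair a slope of order $t^{(d-1)/2}$ with the depth-variance $\var\xi_0\approx t^2$), but the route you take — conditioning on $\xi_0$ and invoking the law of total variance, then trying to show $\hat V_s$ is approximately affine in $\xi_0$ via a cone approximation — is genuinely different from the paper's. The paper instead builds two explicit regions $\Psi_1=(w_1-\Sigma_2^*)\cap\Delta_0$ and $\Psi_2=(w_2+\Sigma_2^*)\cap\Delta_0$, each of proportional volume, uses the dual-cone inclusion~(\ref{dualszigma}) to guarantee $[Z_2,F]\subset[Z_1,F]$ for $(Z_1,Z_2)\in\Psi_1\times\Psi_2$, and then exhibits a concrete truncated cone $G=H_1^+\cap(Z_1+\Sigma_2^*)$ sitting inside $[Z_1,F]$ and disjoint from $[Z_2,F]$, with $\lambda_s(G|L)\gg t^{(s+1)/2}$. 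Integrating that over the $L$'s through $\Sigma_2$ via Lemma~\ref{szoges} yields a \emph{uniform} two-point bound $\hat V_s(Z_1)-\hat V_s(Z_2)\gg t^{(d+1)/2}$, after which the variance bound is immediate.

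There are two genuine gaps in your proposal. The main one is the slope estimate. The cone approximation $\lambda_s([z_0,F]|L)\approx\frac{t-\xi_0}{s}\lambda_{s-1}(F_0|L')$ is only established up to a multiplicative constant, and since $t-\xi_0\in[t(1-1/(4d)),t]$ itself varies over $\Delta_0$ by no more than a constant factor, a constant-factor approximation to a quantity of size $\approx t\,\lambda_{s-1}(F_0|L')$ carries no information about the $\xi_0$-derivative whatsoever — the error term is allowed to swamp the claimed linear variation entirely. (Nor is the restriction to $\Sigma_1$ enough to fix this: even at tilt $\theta=0$ the vertices $z_1,\ldots,z_d$ themselves span a depth range of order $t/(4d)$, which is already comparable to the depth range of $z_0$, so the ``base'' of the cone cannot be treated as lying at a single $u$-level with error smaller than the signal.) To make the slope rigorous you would need an \emph{additive} error bound uniform over the $L$'s, which in practice means constructing and measuring a specific piece of $[Z_1,F]\setminus[Z_2,F]$ — i.e., redoing the paper's $G$-construction. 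The second gap is the propagation step: pointwise monotonicity of $\hat V_s$ (and its two pieces) along segments parallel to $x$ does \emph{not} transfer to monotonicity of the $\xi_0$-conditional means, because the cross-sections $\Delta_0\cap\{\xi_0=c\}$ at different depths are homothets scaling linearly with $c$ rather than vertical translates of one another; the natural ray-from-$x$ coupling fails to preserve the inclusion $Z\in[Z',F]$ near the lateral boundary of $\Delta_0$, where the rays from $x$ overshoot $F$. Without monotone conditional means, the nonnegative-covariance argument that is supposed to lift the bound from $\hat V_s^{\mathrm{red}}$ to $\hat V_s$ does not close.
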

\begin{proof}
Let $w$ be the centroid of the facet of $\Delta_0(x,t)$ opposite to
$x$, let $w_1=\frac23\,x+\frac13\,w$ and
$w_2=\frac13\,x+\frac23\,w$. In addition we define
\begin{eqnarray*}
\Psi_1&=&(w_1-\Sigma_2^*(x,t))\cap \Delta_0(x,t),\\
\Psi_2&=&(w_2+\Sigma_2^*(x,t))\cap \Delta_0(x,t).
\end{eqnarray*}
In particular there exists some constant $\gamma_0>0$ such that
\begin{equation}\label{nagypszi}
 V(\Psi_j)\geq \gamma_0 V(\Delta_0(x,t)),
\end{equation}
and for any $Z_1\in\Psi_1$ and  $Z_2\in\Psi_2$ we have
$[Z_2,z_1,\ldots,z_d]\subset [Z_1,z_1,\ldots,z_d]$.

Fix $L \in G(d,s)$ such that $L\cap \Sigma_2\neq \emptyset$, and
choose an orthonormal basis $e_1, \ldots, e_s$ in $L$, such that
$e_1 \in L\cap \Sigma_2$.

Consider the closed (positive) half-space given by $w_2$ and $e_1$:
$H_1^+=\{y\; | \; \langle y, e_1 \rangle\geq \langle w_2, e_1
\rangle\}$, and the set $G=H_1^+ \cap (Z_1+\Sigma_2^*(x,t))$.
Clearly, $G\subset [F,Z_1]$ and $G\cap [F,Z_2]\subset\{w_2\}$. From
these it follows that
$$\lambda_s([F,Z_1]|L)-\lambda_s([F,Z_2]|L)\geq \lambda_s(G|L).$$

One can see that
$$\lambda_s(G|L)\gg t \cdot \sqrt t^{(s-1)}=t^{\frac{s+1}2},$$ hence
$$ \hat V_s(Z_1)-\hat V_s(Z_2)\gg t^{\frac{s+1}2} \cdot \nu_s (\{L \; |\; L\cap \Sigma_2\neq \emptyset\}). $$

By the definition of $\Sigma_2$, $L\cap \Sigma_2\neq \emptyset$ is equivalent to $L\cap (x +
2d \sqrt tB) \neq \emptyset$. Lemma \ref{szoges} yields that
$$\hat V_s(Z_1)-\hat V_s(Z_2)\gg t^{\frac{d+1}2}.$$ Finally, we obtain
\begin{eqnarray*}
\var \hat V_s(Z) & = & \frac12 \E(\hat V_s(Z_1)-\hat V_s(Z_2))^2\geq\\
& \geq &\frac12 \E[(\hat V_s(Z_1)-\hat V_s(Z_2))^2 \I(Z_1\in\Psi_1,
Z_2\in \Psi_2)]\gg\\
&\gg & t^{d+1} \E[\I(Z_1\in\Psi_1, Z_2\in \Psi_2)]\gg t^{d+1},
\end{eqnarray*}
where the last inequality follows from (\ref{nagypszi}).
\end{proof}

It is sufficient to prove the  lower bound for large enough $n$. We
fix
\begin{equation}\label{eq:tndef}
t_n=n^{-\frac2{d+1}},
\end{equation}
and hence $V(C(x,t_n))\approx 1/n$ for all $x\in S^{d-1}$. We choose a
maximal family of points $y_1,\ldots,y_m\in S^{d-1}$ such that for $i\neq
j$, we have
$$
\|y_i-y_j\|\geq 2\sqrt{\gamma}\sqrt{t_n},
$$
This condition implies that the caps $C(y_j,\gamma T_n)$ ($j \in [m]$) are disjoint. One can see that
\begin{equation}
\label{msize} m\gg n^{\frac{d-1}{d+1}}.
\end{equation}

For each $j \in [m]$ we construct the simplex $\Delta (y_j,t_n)$ in the cap $C(y_j,t_n)$ and
for each $i=0,1,\dots,d$ we construct the corresponding small simplices $\Delta_i(y_j,t_n)$.
For $j\in [m]$, let $A_j$ denote the event that each $\Delta_i(y_j,t_n)$, $i=0,\ldots,d$
contains exactly one random point out of $x_1,\ldots,x_n$, and $C(y_j,\gamma t_n)$ contains
no other random point. We note that the definition of $\Delta_i$, (\ref{capprop}) and
(\ref{capind}) yield that for $i=0,\ldots,d$, we have
$$
V(\Delta_i(y_j,t_n))\gg 1/n \mbox{ \ and \ }V(C(y_j,\gamma t_n))\ll
1/n.
$$
Thus for $j=1,\ldots,m$, we have
\begin{equation}
\label{Ajprob} \mathbb{P}(A_j)\gg \binom {n} {d+1}
\left(\frac{1}n\right)^{d+1}\left(1-\frac{1}n\right)^{n-d-1}\gg 1.
\end{equation}

If $A_j$ holds then we write $Z_j$ to denote the random point in
$\Delta_0(y_j,t_n)$, and $F_j$ to denote the convex hull of the
random points in $\Delta_i(y_j,t_n)$ for $i=1,\ldots,d$. If $J \subset [m]$ and $A_j$
holds for all $j \in J$, then the random variables $\hat
V_s(Z_j;F_j)$ $j \in J$ are independent according to (\ref{capind}).

We next introduce the sigma algebra $\mathcal F$ that keeps track of
everything except the location of $Z_j \in \Delta_0(y_j, t_n)$ for
which $A_j$ occurs. We decompose the variance by conditioning on
$\mathcal F$:
\begin{eqnarray*}
{\var}V_s(K_n) &=& \E\, {\var}( V_s(K_n)\, \vert\, \mathcal F) + {\var} \, \E (V_s(K_n) \vert \, \mathcal F ) \\
&\geq & \E( {\var} V_s(K_n)\, \vert\, \mathcal F) .
\end{eqnarray*}
The independence structure mentioned above implies that
\begin{eqnarray*}
{\var} (V_s(K_n) \, \vert\, \mathcal F) &=& \sum_{  \I(A_j)=1}
{\var} _{Z_{j}} V_s(K_n)
\\ &=&
\sum_{  \I(A_j)=1}  {\var}_{Z_j} \hat V_s (Z_j;F_j)
\end{eqnarray*}
where the variance is taken with respect to the random variable $Z_{j} \in \Delta_0(y_j,
t_n)$, and we sum over all $j=1, \dots ,m$ with $\I(A_j)=1$. Combining this with Lemma
\ref{alsokulcs}, (\ref{eq:tndef}), (\ref{msize}) and with (\ref{Ajprob}) implies
\begin{eqnarray*}
{\var} V_s(K_n) &\gg & \E \left( \sum_{ \I(A_j)=1} t_n^{d+1} \right)
\gg n^{-2} \E \left( \sum_{j=1}^m I(A_j) \right)\\
& \gg &    n^{-2} m  \gg n^{- \frac {d+3}{d+1}}.
\end{eqnarray*}

\section{Proof of the upper bound in Theorem~\ref{ball}}
Now $K=B^d$ is the unit ball and $K_n$ is the corresponding random polytope.

Let $T_n$ be the event that the floating body $K(v \ge (c\log n)/n V(K))$ is contained in
$K_n$. Here $c=c_d$ is a large constant to be specified soon. We write $T_n^c$ for the
complement of $T_n$. We are going to use the main result of \cite{BD} saying that there is a
constant $\delta$ depending only on $d$ such that $T_n^c$ occurs with probability $n^{-\delta
c}$.

We use the Efron-Stein jackknife inequality (\ref{Efronstein}) and Kubota's formula
(\ref{mixvolproj}):

\begin{eqnarray*}
\var (V_s(K_n)) &\ll&  n\cdot \E(V_s(K_{n+1})-V_s(K_n))^2\\
          &=& n\cdot \E[(V_s(K_{n+1})-V_s(K_n))^2\I(T_n)]\\
          &+& n\cdot \E[(V_s(K_{n+1})-V_s(K_n))^2\I(T_n^c)].
\end{eqnarray*}
The second term here is very small if the constant $c$ is chosen large enough because
$(V_s(K_{n+1})-V_s(K_n))^2 \le V_s(K_{n+1})^2\le V_s(K)^2$ and $\E(\I(T_n^c)) \le n^{-\delta
c}$. We choose $c=c_d$ so large that the second term is smaller than the lower bound in
Theorem~\ref{ball} proved in the previous section. So we concentrate on the first term:

\begin{align}\label{alap}
\var (V_s(K_n)) &\ll  n\cdot \E[(V_s(K_{n+1})-V_s(K_n))^2\I(T_n)]\notag\\
&\ll n\cdot \E \left [ \left (\int_{G(d,s)} \lambda_s(K_{n+1}\backslash K_n|A) \nu_s(dA) \right)\right.\times\notag\\
&\phantom{xxxxx}\times \left.\left(\int_{G(d,s)} \lambda_s(K_{n+1}\backslash K_n|B) \nu_s(dB)\I(T_n) \right )\right ]\notag\\
&\ll n\cdot \E \int_{G(d,s)}\int_{G(d,s)}
\lambda_s(K_{n+1}\backslash K_n|A) \times\notag\\
&\phantom{xxxxx}\times\lambda_s(K_{n+1}\backslash K_n|B)\I(T_n) \nu_s(dA)\nu_s(dB).
\end{align}

Note that the set $(K_{n+1}\backslash K_n)|A$ is either empty (if $x_{n+1}|A\in K_n|A$) or it is
the union of several internally disjoint simplices which are the convex hull of $x_{n+1}|A$
and those facets of $K_n|A$ that can be seen from $x_{n+1}|A$. For the index set
$I=\{i_1,\ldots , i_s \} \subset \{1,\ldots, n\}$, let $F_I=[x_{i_1}, \ldots, x_{i_s}]$,
which is an $(s-1)$-dimensional simplex with probability $1$. Clearly $F_I|A$ is also an
$(s-1)$-simplex with probability $1$. The affine hull of $F_I$ is denoted by $\aff F_I$ and
similarly the affine hull of $F_I|A$ is by $\aff(F_I|A)$. Furthermore, let $H_0(F_I,A)$ be
the closed half-space (in $\R^d$) delimited by the hyperplane $A^{\perp}+\aff F_I$ that
contains $o$, and $H_+(F_I,A)$ the other one. Similarly, we use $H_0(F_I|A)$ and $H_+(F_I|A)$
for the corresponding $s$-dimensional half-spaces in $A$. Now, we introduce the notation $\F
(A)$ for the set of ($(s-1)$-dimensional) facets of $K_n|A$ that can be seen from
$x_{n+1}|A$.
\begin{align*} \F (A) &=\{F_I|A \; : \; K_n|A \subset H_0(F_I|A), x_{n+1}|A\in H_+(F_I|A),\\
&\phantom{xxxx}I=\{i_1, \ldots, i_s\}\subset \{1, \ldots, n\}\}.\end{align*}

Of course $\F(A)$ depends on $x_1,\ldots,x_n$ and $x_{n+1}$ as well but we suppress this
dependence in the notation. We continue by estimating the right hand side of (\ref{alap}).

\begin{align*}
&\leq n\cdot\E \left [ \int_{G(d,s)}\int_{G(d,s)}
\lambda_s(K_{n+1}\backslash K_n|A) \lambda_s(K_{n+1}\backslash
K_n|B) \nu_s(dA)\nu_s(dB)\I(T_n)\right ]\\
&\ll\frac{n}{\kappa_d^{n+1}} \int_{K} \ldots \int_{K} \int_{G(d,s)}\int_{G(d,s)} \left
(\sum_{F\in
\F(A)} \lambda_s([x_{n+1}|A, F])\right ) \times \\
&\times  \left (\sum_{F'\in \F(B)} \lambda_s([x_{n+1}|B, F'])\I(T_n)\right ) \nu_s(dA)
\nu_s(dB)dx_1\cdots dx_{n+1}.
\end{align*}
By changing the order of integration and extending integration
over all index sets $I, J\in {[n]\choose s}$, we obtain the following.
\begin{align*}
&=\frac{n}{\kappa_d^{n+1}}  \int_{G(d,s)}\int_{G(d,s)} \int_{K^{n+1}} \left
(\sum_{I} \I(F_I|A\in \F (A))\lambda_s([F_I, x_{n+1}]|A)\right ) \times\\
&\times  \left (\sum_{J} \I(F_J|B\in \F(B) )\lambda_s([F_J, x_{n+1}]|B)\I(T_n)\right )
dx_1\cdots dx_{n+1}\nu_s(dA) \nu_s(dB).
\end{align*}
We use the following notations. Let
$$C_s(I,A)=H_+(F_I|A)\cap B^d,$$ which is, in fact, a subset of the unit ball in the subspace $A$ and
$$C_d(I,A)=(H_+(F_I|A)+A^\perp)\cap B^d.$$
For the volumes of these caps we use $V_s(I,A)=\lambda_s(C_s(I,A))$ and
$V_d(I,A)=\lambda_d(C_d(I,A))$. Now we are going to estimate these integrals from above using
the fact the simplices $[F_I, x_{n+1}]|A$ and $[F_J, x_{n+1}]|B$ are contained in the
associated caps $C_s(I,A)$ and $C_s(J,B)$, respectively.
\begin{align}\label{seged1}
\ll\frac{n}{\kappa_d^{n+1}}
&\int_{G(d,s)}\int_{G(d,s)} \sum_{I} \sum_{J} \int_{(B^d)^{n+1}}
\I(F_I|A\in \F (A)) V_s(I,A) \times\notag\\
&\times\I(F_J|B\in \F(B) ) V_s(J,B) \I(T_n)dx_1\cdots dx_{n+1}\nu_s(dA) \nu_s(dB).
\end{align}
The summation extends over all $s$-tuples $I$ and $J$, so $I$ and $J$ may have nonempty
intersection. If we fix the size of $I\cap J$ to be $k$, say, then the corresponding terms in
the sum are clearly independent of the particular choice of $i_1, \ldots, i_s$ and $j_1,
\ldots, j_s$. For any given $k\in \{0,1, \ldots, s\}$ let $I=\{1,\dots,s\}$ and
$J=\{s-k+1,\dots2s-k\}$ and set $F=\conv\{x_i:i\in I\}$ and $G=\conv\{x_j: j\in J\}$. Thus
$I$ and $J$, and consequently $F$ and $G$ depend on $k$, but this is not shown in the
notation. We can estimate (\ref{seged1}) from above by
\begin{align}\label{seged2}
&\leq\frac{n}{\kappa_d^{n+1}} \sum_{k=0}^s {n \choose s}
{s \choose k} {{n-s} \choose {s-k}} \int_{G(d,s)}\int_{G(d,s)}
\int_{(B^d)^{n+1}} \I(F|A\in \F(A)) \times\notag\\
&\times V_s(I,A)\I(G|B\in \F(B) )V_s(J,B) \I(T_n)dx_1\cdots dx_{n+1}\nu_s(dA) \nu_s(dB).
\end{align}
Since the integrand is symmetric, we may restrict summation to those pairs of $F$ and $G$
where $V_s(I,A)\geq V_s(J,B)$, or equivalently, $V_d(I,A)\geq V_d(J,B)$, at
the price of a factor $2$. Thus, we can estimate (\ref{seged2}) from above by
\begin{align}\label{borzaszto}
\leq&\sum_{k=0}^s \frac{2n}{\kappa_d^{n+1}}
 {n \choose s} {s \choose k} {{n-s} \choose {s-k}}
\int_{G(d,s)}\int_{G(d,s)} \int_{(B^d)^{n+1}}\I(F|A\in \F (A))
 \times\notag\\
& \times V_s(I,A)\I(G|B\in \F(B))V_s(J,B)\I(V_s(I,A)\geq V_s(J,B)) \times\notag\\
& \times\I(T_n)dx_1\cdots dx_{n+1} \nu_s(dA) \nu_s(dB).
\end{align}
Let $\Sigma_k$ denote the $k$th term in this sum, $k=0,\ldots,s$. We are going to estimate
$\Sigma_k$ for each fixed $k$.

We first remove $\I(G|B\in \F(B) )$ from the integrand in $\Sigma_k$ which clearly increases
the integral. We then multiply the integrand by $\I(C_d(I,A)\cap C_d(J,B)\neq \emptyset)$.
This does not change the integral since the sets $C_d(I,A)$ and $C_d(J,B)$ have at least the
point $x_{n+1}$ in common. Thus we obtain the following
\begin{align}\label{seged3}
\Sigma_k \ll &\frac{n^{2s-k+1}}{\kappa_d^{n+1}} \int_{G(d,s)}\int_{G(d,s)} \int_{(B^d)^{n+1}}
\I(F|A\in \F (A))V_s(I,A)\times\notag\\
&\times \I(C_d(I,A)\cap
C_d(J,B)\neq \emptyset) V_s(J,B)\I(V_s(I,A)\geq V_s(J,B))\times\notag\\
&\times  \I(T_n) dx_1\cdots dx_{n+1} \nu_s(dA) \nu_s(dB).
\end{align}

Now, if $F|A\in\F(A)$, then $x_{2s-k+1}, \ldots, x_n$ are all contained in $H_0(F,A)$ and
$x_{n+1}$ is contained in $H_+(F,A)$ because, under condition $T_n$, $C_d(I,A)$ is the
smaller cap cut off from $B^d$ by the hyperplane $A^{\perp}+\aff F$ and $o \in K_n$. We
integrate with respect to $x_{2s-k+1}, \ldots, x_n, x_{n+1}$, and the condition $T_n$ is
replaced by the condition $W_n$ saying that $V_d(I,A)\le (c\log n)/n V(B^d)$.

\begin{align}\label{seged4}
\Sigma_k \ll &\frac{n^{2s-k+1}}{\kappa_d^{n+1}} \int_{G(d,s)}\int_{G(d,s)}
\int_{(B^d)^{2s-k}}
(\kappa_d-V_+(F,A))^{n-2s+k}V_d(I,A)\times\notag\\
&\times  V_s(I,A)\I(C_d(I,A)\cap
C_d(J,B)\neq \emptyset)  V_s(J,B)\times\notag\\
&\times \I(V_d(I,A)\geq V_d(J,B)) \I(W_n)dx_1\cdots dx_{2s-k}
\nu_s(dA) \nu_s(dB)\notag\\
\ll &n^{2s-k+1} \int_{G(d,s)}\int_{G(d,s)} \int_{(B^d)^{2s-k}}
(1-V_d(I,A)/\kappa_d)^{n-2s+k} \times\notag\\
&\times V_d(I,A) V_s(I,A)\I(C_d(I,A)\cap
C_d(J,B)\neq \emptyset)  V_s(J,B)\times\notag\\
&\times \I(V_d(I,A)\geq V_d(J,B)) \I(W_n)dx_1\cdots dx_{2s-k} \nu_s(dA) \nu_s(dB).
\end{align}

In the next step, we integrate with respect to the variables $x_i$, $i \in J$.
\begin{eqnarray}\label{pontokra}
\nonumber \int_{(B^d)^{s-k}} \I(C_d(I,A)\cap C_d(J,B)\neq
\emptyset)\I(V_d(I,A)\geq V_d(J,B))\times\\
\times V_+(2,B,s) \I(W_n)dx_{s+1}\ldots dx_{2s-k}.
\end{eqnarray}

Since we assume that $V_d(I,A)\geq V_d(J,B)$, the height of the cap $C_d(I,A)$ is at least that of
$C_d(J,B)$. The condition $C_d(I,A)\cap C_d(J,B)\neq \emptyset$ implies that there is a
constant $\beta$ such that $C_d(J,B)$ in contained in $\beta C_d(I,A)$, where $\beta
C_d(I,A)$ is an enlarged homothetic copy of $C_d(I,A)$, where the centre of homothety is $z
\in \partial B^d$, the centre of the cap $C_d(I,A)$ (cf \cite{Bar06}). Thus,

\begin{eqnarray}\label{pontokbecs}
(\ref{pontokra}) \leq \beta^{d(s-k)} V_d(I,A)^{s-k} V_s(J,B)\ll V_d(I,A)^{s-k} V_s(I,A).
\end{eqnarray}

The conditions $C_d(I,A)\cap C_d(J,B)\neq \emptyset$  and $V_d(I,A)\geq V_d(J,B)$ can only be
satisfied if the angle, $\angle(z,B)$, between the vector $z$ and the subspace $B$ is not
larger than $2\alpha$, where $\alpha$ is the central angle of the cap $C_d(I,A)$. One can
easily verify that
\begin{equation}\label{alfakicsi}
\alpha\le  b_d V_d(I,A)^{1/(d+1)},
\end{equation}
where $b_d$ is a constant depending only on $d$. Using this condition on the mutual positions
of $z$ and $B$ together with (\ref{pontokbecs}) we obtain that

\begin{align}\label{seged5}
(\ref{seged4})\ll &n^{2s-k+1}\int_{G(d,s)}\int_{G(d,s)}
\int_{(B^d)^{s}}
(1-V_d(I,A)/\kappa_d)^{n-2s+k}V_d(I,A)^{s-k+1} \times\notag\\
&\times  V_s(I,A)^2 \I(\angle(z,B)\leq 2b_d V_d(I,A)^{1/(d+1)}) \times \notag\\
&\times \I(W_n)dx_1\cdots dx_{s} \nu_s(dA) \nu_s(dB).
\end{align}

We fix now $A\in G(d,s)$ and estimate
\begin{equation}\label{belso}
\int_{(B^d)^{s}} (1-V_d(I,A)/\kappa_d)^{n-2s+k} V_d(I,A)^{s-k+1} V_s(I,A)^2 \I(W_n)dx_1\cdots
dx_{s}
\end{equation}
We are going to use the Economic Cap Covering Theorem. Because of the condition $W_n$, every
cap $C_d(I,A)$ has volume at most $(c\log n)/n \kappa_d$. Let $h$ be a (positive) integer
with $2^{-h}\leq \frac{c \ln n}{n}$. For each such $h$, let $\M_h$ be a collection of caps
$\{C_1, \ldots, C_{m(h)}\}$ forming the economic cap covering of the wet part of $B^s=B^d|A$
with $t=(\kappa_d 2^{-h})^{\frac{s+1}{d+1}}$ (we suppose that $n$ is so large, that the
theorem works). Each such cap $C_i$ is the projection of a $d$-dimensional cap $C_i(A)$ from
$B^d$ to $A$. Since the heights of $C_i$ and $C_i(A)$ are equal, we have that
$\lambda_d(C_i(A))\ll \kappa_d 2^{-h}$. Consider an arbitrary $(x_1, \ldots, x_s)$ with the
corresponding $C_d(I,A)$ having volume at most $(c\log n)/n \kappa_d$, and associate with
$(x_1, \ldots, x_s)$ the maximal $h$ such that for some $C_i\in \M_h$, $C_s(I,A)\subset C_i$.
Such an $h$ clearly exists. It follows that
$$V_s(I,A)\leq \lambda_s(C_i)\ll 2^{-h\frac{s+1}{d+1}}$$
and $$V_d(I,A)\leq \lambda_d(C_i(A))\ll 2^{-h}.$$ On the other hand, by the maximality of
$h$,
$$V_s(I,A)\geq (\kappa_d 2^{-(h+1)})^{\frac{s+1}{d+1}}$$
and consequently $$V_d(I,A)/\kappa_d \geq 2^{-(h+1)}.$$ Now we shall integrate over $(B^d)^s$ under
condition $W_n$ by integrating each $(x_1,\ldots, x_s)$ on its associated $C_i(A)$, or more
precisely on $(C_i(A))^s$. The integrand in (\ref{belso}) can be estimated as
\begin{eqnarray*}(1-V_d(I,A)/\kappa_d)^{n-2s+k} V_d(I,A)^{s-k+1}
V_s(I,A)^2\\
\ll (1-2^{-(h+1)})^{n-2s+k}\cdot 2^{-h(s-k+1)}\cdot
2^{-2h\frac{s+1}{d+1}}.
\end{eqnarray*}
Thus the integral on $(C_i(A))^s$ ($C_i(A)\in \M_h$) is bounded by
\begin{eqnarray}\label{integrandh}
\nonumber \exp(-(n-2s+k)2^{-h-1}))2^{-h(s+k-1)}2^{-2h\frac{s+1}{d+1}}(V_d(C_i(A)))^s\\
\ll
\exp(-(n-2s+k)2^{-h-1}))2^{-h(s+k-1)}2^{-2h\frac{s+1}{d+1}}2^{-hs}.
\end{eqnarray}

Now we return to (\ref{seged5}). In order to estimate the integral, we still need the number
of the elements $|\M_h|$ of $\M_h$. The volume of the wet part of $B^s$ with parameter
$2^{-h\frac{s+1}{d+1}}$ is $\lambda_s(B^s(2^{-h\frac{s+1}{d+1}}))\approx 2^{\frac{-2h}{d+1}}$
(the $\approx$ notation makes sense, since $h\to \infty$ as $n\to \infty$). It readily
follows that
$$|\M_h|\ll \frac{2^{-2h\frac{1}{d+1}}}{2^{-h\frac{s+1}{d+1}}}=2^{\frac{h(s-1)}{d+1}}.$$

 Keeping in mind the condition $\angle(z,B)\leq 2b_d V_d(I,A)^{1/(d+1)}$ and applying Lemma \ref{szoges} and
(\ref{integrandh}), we obtain with $h_0=\left \lfloor \frac{c \ln n}{n} \right \rfloor$,
\begin{eqnarray}
\nonumber && \int_{G(d,s)} \int_{(B^d)^{s}} (1-V_d(I,A)/\kappa_d)^{n-2s+k}
V_d(I,A)^{s-k+1} V_s(I,A)^2 \times\\
\nonumber &\times &\I(\angle(z,B)\leq 2b_d V_d(I,A)^{1/(d+1)})
dx_1\cdots dx_{s} \nu_s(dB)\\
\nonumber &\ll & \sum_{h=h_0}^{\infty} \exp(-(n-2s+k)2^{-h-1}))2^{-h(s+k-1)}2^{-2h\frac{s+1}{d+1}}2^{-hs}\times \\
\nonumber & \times & |\M_h| \nu_s(\{B \, | \, \angle(z,B)< 2b_d 2^{\frac{-h}{d+1}}\})\\
\nonumber & \ll & \sum_{h=h_0}^{\infty} \exp(-(n-2s+k)2^{-h-1}))2^{-h(s+k-1)}2^{-2h\frac{s+1}{d+1}}2^{-hs} 2^{\frac{h(s-1)}{d+1}} 2^{\frac{-h(d-s)}{d+1}}\\
&=& \sum_{h=h_0}^{\infty} \exp(-(n-2s+k)2^{-h+1}))2^{-h[(2s-k+1)+\frac{d+3}{d+1}]}.
\end{eqnarray}

Now, we divide the sum in (\ref{integrandh}) into two parts. First, let $h_1$ be defined by $$2^{-h_1}\leq \frac1n < 2^{-h_1+1}.$$ Since in this case $\exp(-(n-2s+k)2^{-h-1}))$ is smaller than $1$, it follows that
\begin{eqnarray}\label{farok}
 \nonumber \sum_{h=h_1}^{\infty} \exp(-(n-2s+k)2^{-h-1}))2^{-h[(2s-k+1)+\frac{d+3}{d+1}]}\\
\leq\sum_{h=h_1}^{\infty} 2^{-h[(2s-k+1)+\frac{d+3}{d+1}]}\ll n^{-2s+k-1}n^{-\frac{d+3}{d+1}}.
\end{eqnarray}
For the other part, when $h_0\leq h<h_1$, we let $\ell=h_1-h$. Then $\ell$ runs from $1$ to
$\ell_1=h_1-h_0$.
\begin{eqnarray}\label{torzs}
 \nonumber & & \sum_{h=h_0}^{h_1-1} \exp(-(n-2s+k)2^{-h-1}))2^{-h[(2s-k+1)+\frac{d+3}{d+1}]}\\
\nonumber & \leq & \sum_{\ell=1}^{\ell_1} \exp(-(n-2s+k)2^{-h_1+\ell-1}))2^{-(h_1-\ell)
[(2s-k+1)+\frac{d+3}{d+1}]}\\
\nonumber & \ll & \sum_{\ell=1}^{\ell_1} \exp(-(n-2s+k)2^{-h_1+\ell-1})) n^{-(2s-k+1)}
2^{\ell(2s-k+1)}n^{-\frac{d+3}{d+1}}2^{\ell \frac{d+3}{d+1}}\\
\nonumber & \ll & n^{-(2s-k+1)}n^{-\frac{d+3}{d+1}} \sum_{\ell=1}^{\infty} \exp (-2^\ell)
\cdot 2^{\ell[(2s-k+1)+\frac{d+3}{d+1}]}\\
& \ll & n^{-(2s-k+1)}n^{-\frac{d+3}{d+1}} \sum_{j=1}^{\infty} \exp (-j) j^{4d}\ll n^{-(2s-k+1)}
n^{-\frac{d+3}{d+1}}.
\end{eqnarray}

Now, putting (\ref{farok}) and (\ref{torzs}) back to (\ref{seged5}), we get that
\begin{eqnarray*}
\Sigma_k \ll n^{2s-k+1}\int_{G(d,s)} n^{-(2s-k+1)}n^{-\frac{d+3}{d+1}}\nu_s(dA)\ll
n^{-\frac{d+3}{d+1}}.
\end{eqnarray*}
Summing this for all $k=0,\ldots,s$ proves the upper bound in Theorem \ref{main}.

\section{Outline of the proof of Theorem~\ref{main}}

In this section we will give a brief outline how to modify the proof of Theorem
\ref{ball} so that it applies to convex bodies with $C^2_+$ boundary. We chose to give the
detailed proof only for the unit ball, because all the major ideas appear in that case and it
is naturally easier to follow. The proof in the general case uses the same tools as the proof
for the unit ball combined with some well-known estimates about convex bodies. We make only a
few remarks about the proof and leave the details for the interested reader.

Since $K$ is compact there exist a global upper bound $\gamma$ and a global
lower bound $\Gamma$ on the principal curvatures of $\partial K$.
We also know that for every $x\in \partial K$, there exists a unique outer unit normal
$u_x$ to $K$ at the point $x$. We define the cap $C(x,t)$ such that it is
cut off by the hyperplane $H(x,t):=\{y\, |\, \langle y, u_x \rangle =\langle x, u_x\rangle -t\}$.
It readily follows that (\ref{gyokt}) remains true in the following form:
\begin{equation}\label{gyoktmod}
(x_t+\gamma_1 \sqrt t B^{d} )\cap H(x,t) \subset H(x,t)\cap K \subset (x_t+\gamma_2\cdot \sqrt t B^{d}) \cap H(x,t),
\end{equation}
where the constants $\gamma_1$ and $\gamma_2$ depend on $\gamma$ and $\Gamma$. These estimates
yield that the simplices used in the proof can be defined in the same way as in the case of
the unit ball, and they have the same size $\approx t^{\frac{d+1}{2}}$. We also need a slight
modification of the definitions of  $\Sigma_1$ and $\Sigma_2$:
$$\Sigma_1(x,t)=S^{d-1} \cap \left (u_x +
\frac{\sqrt{\gamma t}}{8}B^d \right )$$ and $$\Sigma_2(x,t)=S \cap \left (u_x + 2d
\sqrt{\Gamma t}B^d \right ).$$ From this point, the steps of the proof can be followed
without complications. For the details of a similar argument see \cite{BR} or \cite{BFRV09}.

For the proof of the upper bound, we need that all projected images of $K$ to have $C^2_+$
boundary, furthermore, we can choose $\gamma$ and $\Gamma$ in such a way that they are not
only upper and lower bounds of the principal curvatures of $K$ but also for all
($s$-dimensional) projections of $K$. These facts yield that the volume of a cap of height
$t$ is $\approx t^{\frac{i+1}{2}}$, where $i$ is the dimension of the cap (in the proof $d$
or $s$, respectively). From (\ref{gyoktmod}) it follows that (\ref{alfakicsi}) remains true.
We note that we did not really need the Economic Cap Covering Theorem in the case of the
ball, however, in the general case we make a full use of it. Naturally, the stated equalities
on the volumes of the caps are not true any more but the existence of $\gamma$ and $\Gamma$
implies that they hold with $\approx$. The calculations finishing the proof can be done
precisely the same way as for the unit ball.

\section{Proof of Theorem~\ref{largenum}}

We are going to show that the asymptotic formula (\ref{poscurv}) and  Theorem~\ref{main} yield
the strong law of large numbers for $V_s(K_n)$ by standard arguments.

We deduce by Chebyshev's inequality that
\begin{eqnarray*}
\mathbb{P}
\left(\left|V_s(K)-V_s(K_n)-\E(V_s(K)-V_s(K_n))\right|n^{\frac2{d+1}}
\geq \varepsilon\right)&\leq &  \varepsilon^{-2}n^{\frac4{d+1}}
{\rm Var}V_s(K_n)\\
&\ll&n^{-\frac{d-1}{d+1}}.
\end{eqnarray*}
Since the sum $\sum_{k=2}^\infty n_k^{-\frac{d-1}{d+1}}$ is finite
for $n_k=k^4$, the sum of the probabilities
$$
\mathbb{P}
\left(\left|V_s(K)-V_s(K_{n_k})-\E(V_s(K)-V_s(K_{n_k}))\right|n_k^{\frac2{d+1}}
\geq \varepsilon\right)
$$
for $k\geq 2$ is finite as well. Therefore the Borel-Cantelli lemma
and the asymptotic formula (\ref{poscurv}) yield that
\begin{equation}
\label{strongsub} \lim_{k\to\infty}(V_s(K)-V_s(K_{n_k}))
n_k^{\frac2{d+1}}= c_{d,j}\int_{S} (\sigma_{d-1}(x))^{\frac{1}{d+1}}
\sigma_{d-j}(x)dx
\end{equation}
with probability $1$. Now, $V_s(K)-V_s(K_n)$ is decreasing, and
hence
$$
(V_s(K)-(V_S(K_{n_{k-1}}))n_{k-1}^{\frac2{d+1}}\leq
(V_s(K)-V_s(K_n))n^{\frac2{d+1}}\leq
(V_s(K)-V_s(K_{n_k}))n_k^{\frac2{d+1}}
$$
hold for $n_{k-1}\leq n\leq n_k$. As
$\lim_{k\to\infty}\frac{n_k}{n_{k-1}}=1$,
the subsequence limit theorem yields Theorem~\ref{largenum}.

\small

\noindent Imre B\'ar\'any\\
Alfr\'ed R\'enyi Institute of Mathematics,\\
PO Box 127, H-1364 Budapest, Hungary, \\
barany@renyi.hu\\
and\\
Department of Mathematics, University College London, \\
Gower Street, London, WC1E 6BT, U.K.\\

\noindent Ferenc Fodor\\
Department of Geometry, University of Szeged,\\
Aradi v\'ertan\'uk tere 1, H-6720 Szeged, Hungary\\
fodorf@math.u-szeged.hu\\
and\\
Department of Mathematics and Statistics\\
University of Calgary\\
2500 University Dr. N.W.\\
Calgary, Alberta, Canada\\
T2N 1N4\\

\noindent Viktor V\'{\i}gh\\
Bolyai Institute, University of Szeged,\\
Aradi v\'ertan\'uk tere 1, H-6720 Szeged, Hungary\\
vigvik@math.u-szeged.hu

\end{document}